\theoremstyle{plain}
\newtheorem{theorem}{Theorem}
\newtheorem{lemma}[theorem]{Lemma}
\newtheorem{corollary}[theorem]{Corollary}
\newtheorem{proposition}[theorem]{Proposition}
\theoremstyle{definition}
\newtheorem{definition}[theorem]{Definition}
\newtheorem{example}[theorem]{Example}
\newtheorem{question}[theorem]{Question}
\theoremstyle{remark}
\newtheorem{remark}[theorem]{Remark}
\title{\bf On the rank function of a differential poset}
\author{Richard P. Stanley\\
\small Department of Mathematics\\[-0.8ex]
\small MIT\\[-0.8ex]
\small Cambridge, MA 02139-4307\\
\small\tt rstan@math.mit.edu\\
\and
Fabrizio Zanello\\
\small Department of Mathematical  Sciences\\[-0.8ex]
\small Michigan Tech\\[-0.8ex]
\small Houghton, MI  49931-1295\\
\small\tt zanello@math.mit.edu
}
\date{\dateline{November 19, 2011}{April 15, 2012}\\
\small Mathematics Subject Classifications: Primary: 06A07; Secondary: 06A11, 51E15, 05C05.}
\begin{document}
\maketitle

\begin{abstract}
We study $r$-differential posets, a class of combinatorial objects introduced in 1988 by the first author, which gathers together a number of remarkable combinatorial and algebraic properties, and generalizes important examples of ranked posets, including the Young lattice. We first provide a simple bijection relating differential posets to a certain class of hypergraphs,  including all finite projective planes, which are shown to be naturally embedded in the initial ranks of some differential poset. As a byproduct, we prove the existence, if and only if $r\geq 6$, of $r$-differential posets nonisomorphic in any two consecutive ranks but having the same rank function. We also show that the Interval Property,  conjectured by the second author and collaborators for several sequences of interest in combinatorics and combinatorial algebra, in general fails for differential posets.  In the second part, we prove that the rank function $p_n$ of any arbitrary $r$-differential poset has nonpolynomial growth; namely, $p_n\gg n^ae^{2\sqrt{rn}},$ a bound very close to the Hardy-Ramanujan asymptotic formula that holds in the  special case of Young's lattice. We conclude by posing several open questions.

\bigskip\noindent \textbf{Keywords:} Partially ordered set, Differential poset, Rank function, Young lattice, Young-Fibonacci lattice,  Hasse diagram, Hasse walk, Hypergraph, Finite projective plane, Steiner system, Interval conjecture.
\end{abstract}

\section{Introduction}

The first author introduced $r$-differential posets  in 1988 (\cite{St}), as a class of ranked posets enjoying a number of important combinatorial and algebraic properties. The best known examples of differential posets are the \emph{Young lattice} $Y$, and the \emph{Fibonacci $r$-differential posets} $Z(r)$; these latter are a generalization of the Young-Fibonacci lattice, which corresponds to the case $r=1$. Many authors have since   provided interesting generalizations, in several directions, of the concept of a differential poset; see \cite{BR,Fo1,Fo2,Ho,Lam1,Lam2,St2}.

Despite  their relevance, a number of basic  questions concerning these posets remain open. For instance, it is unknown what the structure of a differential poset $P$ can be, even in low rank; it is still a conjecture that the rank function, $p_n$, of $P$ must be \emph{strictly} increasing (that it \emph{weakly} increases was proved in \cite{St}, by means of a linear algebra argument); and, no nontrivial lower bounds were known until now on the growth of $p_n$ (see \cite{B} for a sharp upper bound). The object of this note is to start  addressing some of the above or related structural questions, by also drawing some interesting,  natural connections to other combinatorial areas.

The first part of our paper is devoted to proving a natural bijection relating differential posets to such other combinatorial  objects as hypergraphs, Steiner systems and finite projective planes. These latter are  shown to be all naturally embedded in the first two ranks of some differential poset. It follows that a characterization of the initial portion of differential posets would already imply, for instance, a complete classification of finite projective planes, one of the major open questions in geometry. As a consequence of our correspondence, we  establish the existence, if and only if $r\geq 6$, of $r$-differential posets with the same rank function but nonisomorphic in any two consecutive ranks, and we also  show that the Interval Property ---  a (still mostly conjectural) property of interest in other areas of combinatorial algebra and combinatorics --- in general fails for differential posets. 

In the second part of the paper, we prove that, for any  $r$-differential poset, its rank function $p_n$ has nonpolynomial growth. The conjectural lower bound for $p_n$ is given by the rank function $p_r(n)$ of $Y^r$, the Cartesian product of $r$ copies of the Young lattice. This latter function, using the Hardy-Ramanujan asymptotic formula for the partition function $p(n)=p_1(n)$, and some standard properties of the Riemann zeta function, can be proved to be asymptotically equal to $Cn^{\alpha }e^{\pi\sqrt{2rn/3}},$ for suitable constants $C$ and $\alpha$. Our main result is that, for any arbitrary $r$-differential poset, $p_n\gg n^ae^{2\sqrt{rn}},$ for some constant $a$. Since $\pi\sqrt{2/3}=2.56...$, this bound is  close to being optimal, and in the much greater generality of arbitrary differential posets.

We conclude our note by listing some of the main open questions on the rank function of a differential poset.

We refer to \cite{St,St1} for any unexplained terminology, and for the basic facts of the theories of posets and differential posets. 

\begin{definition}\label{diff}
Let $r$ be a positive integer, and  $P=\bigcup_{n\geq 0}P_n$  a locally finite, graded poset with a unique element of rank zero. Then $P$ is  \emph{$r$-differential} if:
\begin{enumerate}
\item[(i)]  two distinct elements $x,y\in P$ cover  $k$ common elements if and only if they are covered by  $k$ common elements; and
\item[(ii)] an element $x\in P$ covers $m$ elements if and only if it is covered by $m+r$ elements.
\end{enumerate}
\end{definition}

The next lemma will be of great (and often implicit) use. Its proof is easy to see by induction, given that a differential poset has a unique least element (see \cite{St1}, Proposition 1.2).

\begin{lemma}\label{cover}
The integer $k$ of Definition \ref{diff}(i) equals 0 or 1. In other words, any two distinct elements of a differential poset can cover, or be covered by, at most one common element.
\end{lemma}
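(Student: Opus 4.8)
The plan is to argue by induction on rank (equivalently, via a minimal counterexample). First I would invoke condition~(i) of Definition~\ref{diff} to collapse the statement to a single inequality: since two distinct $x,y\in P$ cover $k$ common elements if and only if they are covered by $k$ common elements, the number of elements covered by both $x$ and $y$ equals the number of elements covering both; call this common value $k(x,y)$. It then suffices to show $k(x,y)\le 1$ for every pair of distinct elements, and the ``covered by'' clause of the lemma follows for free. I would also record the trivial observation that if $x$ and $y$ cover a common element $u$, then $x$ and $y$ have the same rank, one more than the rank of $u$; so every such pair sits inside a single rank.

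For the base of the induction I would handle ranks $0$ and $1$. There is a unique element $\widehat{0}$ of rank $0$, and by \cite{St1}, Proposition~1.2 it is in fact the least element of $P$; since $P$ is graded, every element of rank $1$ covers $\widehat{0}$ and covers nothing else. Hence no two distinct elements of rank at most $1$ can cover two common elements, so $k(x,y)\le 1$ automatically in that range (and there is nothing to check at rank $0$, which has a single element).

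For the inductive step, suppose for contradiction that distinct $x$ and $y$ have some rank $n\ge 2$ with $k(x,y)\ge 2$; then there are distinct elements $u,v$ of rank $n-1$ each covered by both $x$ and $y$. Now $u$ and $v$ are simultaneously covered by the two distinct elements $x$ and $y$, so $k(u,v)\ge 2$, while $u$ and $v$ have rank $n-1<n$ --- contradicting the inductive hypothesis. This completes the induction and yields $k(x,y)\in\{0,1\}$ for all distinct $x,y$, which is exactly the assertion of Lemma~\ref{cover}.

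I do not expect a genuine obstacle here; the one point requiring a little care is the base case, where one must use both that the rank-$0$ element is unique \emph{and} that it is the minimum of $P$ (so that local gradedness forces each rank-$1$ element to cover it and only it) --- which is precisely where the cited fact about a unique least element enters. Beyond that, the argument is purely a descent in rank: a counterexample at rank $n$ (distinct $x,y$ with two common lower covers $u,v$) immediately exhibits a counterexample at rank $n-1$, namely the pair $u,v$, which is covered by the two distinct elements $x$ and $y$; and such a descent cannot reach below rank~$1$.
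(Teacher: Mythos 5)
Your proof is correct and follows exactly the route the paper indicates (the paper only sketches it as ``easy to see by induction, given that a differential poset has a unique least element''): induction on rank, with the base case secured by the unique minimal element and the descent step using axiom (i) to turn two common upper covers of $u,v$ into two common lower covers. Nothing further is needed.
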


The following construction, which generalizes that of the Fibonacci $r$-differential poset $Z(r)$, is due to D. Wagner (see \cite{St,St1}), and is still the main general tool known today to construct new differential posets. In particular, it was used by J.B. Lewis \cite{Le} to prove the existence of uncountably many nonisomorphic differential posets.

Let $P'=\bigcup_{n=0}^jP_n$ be an $r$-differential poset \emph{up to rank $j$}; that is, $P'$ is a graded poset of rank $j$ with a unique element of rank zero, satisfying Definition \ref{diff} in all ranks up to $j-1$. Let $P_{j-1}=\{y_1,\dots ,y_t\}$. Wagner's construction adds a next rank, $P_{j+1}$, to $P'$, as follows. For each $i=1,\dots ,t$, place an element $w_i$ in $P_{j+1}$ such that it covers some element $x\in P_j$ if and only if $x$ covers $y_i$. Also, for each  $x\in P_j$, place $r$ new elements in $P_{j+1}$, each covering only $x$. It is easy to check that the new poset $P'\bigcup P_{j+1}$ thus defined is $r$-differential up to rank $j+1$. Infinitely iterating this construction yields an $r$-differential poset, $P=\bigcup_{n\geq 0}P_n$. 

It immediately follows from Wagner's construction that the rank function of $P$ is given by $p_n=rp_{n-1}+p_{n-2}$, for all indices $n\geq j+1$.

\section{(Negative) structural results}

In this section, we  present a very natural correspondence between differential posets and a suitable family of hypergraphs, which  allows us to shed some more light on the complicated structure of a differential poset. In the concluding section of \cite{St}, the first author asked for a classification of differential posets, and pointed out how difficult this problem is by  noticing that, in light of Wagner's construction, it is even ``unlikely that [the problem] has a reasonable answer''. We  see in this section  that, in fact, even a characterization of  ranks one and two of a differential poset would imply, as particular cases, results of remarkable interest in finite geometry and design theory --- including a classification of all finite projective planes. 

As a consequence of our correspondence, we  also show that there exist $r$-differential posets that are nonisomorphic in any two consecutive ranks but have the same rank function, if and only if $r\geq 6$. Further, we prove that the Interval Property fails for the set $\mathbb X$ of all rank functions of differential posets, by showing that $1,4,14,60,254,p_5,p_6,\dots $ and $1,4,17,60,254,p_5,p_6,\dots $ belong to $\mathbb X$ for a suitable choice  of the $p_i$,  while no sequence in $\mathbb X$ can begin $1,4,16,\dots $. 

Recall that a \emph{hypergraph} $H$ is a pair $H=(V,E)$, where $V=\{1,\dots, r\}$ is a vertex set, and $E$ is a  collection of nonempty subsets of $V$, called the  \emph{hyperedges} of $H$.  Borrowing a term from algebraic combinatorics, we define the \emph{dimension} of a hyperedge $F$ of $H$ as $\dim F = |F| -1$. We say that $H$ is \emph{$k$-uniform} if all of its hyperedges have dimension $k-1$. The \emph{$(r-1)$-simplex} is the hypergraph over $V$ whose only hyperedge is $V$ itself. 

The \emph{complete graph} over the vertex set $V$ is the graph, denoted by $K_r$, where all vertices of $V$ are connected by an edge. Therefore, $K_r$ can  be identified with the 2-uniform hypergraph $H$ over $V$ where every cardinality two subset of $V$ is a hyperedge of $H$. Finally, following \cite{St1}, given a graded poset $P=\bigcup_{n\geq 0}P_n$ and a set $S$ of nonnegative integers, define the \emph{rank-selected subposet} $P_S$ of $P$ as the poset of elements of $P$ whose ranks belong to $S$; that is, $P_S=\bigcup_{n\in S}P_n$. If $S$ is the interval $\{a,a+1,\dots ,b\}$, one usually writes $S=[a,b]$.

We have:

\begin{proposition}\label{simp}
\begin{enumerate}
\item[(1)] The rank-selected subposets $P_{[1,2]}$ of $r$-differential posets are in bijection with the hypergraphs $H$ over $V=\{1,\dots, r\}$, such that  all hyperedges of $H$ have positive dimension and any 2-subset of $V$ is contained in exactly one hyperedge of $H$.
\item[(2)] Fix an $r$-differential poset $P$ with rank function $\textbf{p}{\ }: {\ }p_0=1,p_1=r,p_2,\dots $, and let $T_1$ denote the sum of the dimensions of the hyperedges of the hypergraph associated with $P_{[1,2]}$, as from part (1). Then $$p_2=r(r+1)-T_1.$$
\item[(3)] The maximum value that $p_2$ may assume for an $r$-differential poset is $r^2+1$; it is achieved by  $Z(r)$, whose associated hypergraph is the $(r-1)$-simplex.
\item[(4)] If $r>1$, the next largest value  $p_2$ may assume is $r^2-r+3$, corresponding to a hypergraph with one $(r-2)$-dimensional hyperedge and $r-1$ 1-dimensional hyperedges.
\item[(5)] The smallest value  $p_2$ may assume is $\binom{r+2}{2}-1$; it is achieved by  $Y^r$ (the Cartesian product of $r$ copies of the Young lattice $Y$), whose associated  hypergraph is $K_r$.
\end{enumerate}
\end{proposition}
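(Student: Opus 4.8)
The plan is to establish the correspondence in part (1) first, and then obtain (2)--(5) from a single double-counting identity together with elementary extremal estimates, leaving the genuinely delicate case (4) for last.

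For part (1), I would start from the unique minimum $\hat 0$ of $P$: since $\hat 0$ covers no element, Definition \ref{diff}(ii) forces it to be covered by $r$ elements, so $P_1=V=\{1,\dots,r\}$. To each $z\in P_2$ I attach its down-set $F_z=\{a\in P_1:z \text{ covers } a\}$. Any two $a,b\in P_1$ cover exactly one common element (namely $\hat 0$), so by Definition \ref{diff}(i) they are covered by exactly one common element of $P_2$; equivalently, every $2$-subset of $V$ lies in exactly one set $F_z$, and that set satisfies $|F_z|\ge 2$. By Lemma \ref{cover} two elements of $P_2$ cover at most one common element of $P_1$, so distinct sets $F_z$ of size $\ge 2$ meet in at most one vertex. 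Declaring these sets to be the hyperedges of $H$ yields a hypergraph of the required type. For the inverse map I would reconstruct $P_{[1,2]}$ from $H$: each hyperedge gives one element of $P_2$ covering exactly its vertices, and for each $a\in V$ I add $(r+1)-s_a$ further elements covering only $a$, where $s_a$ is the number of hyperedges through $a$; here Definition \ref{diff}(ii) applied to $a\in P_1$ forces $a$ to be covered by exactly $r+1$ elements, and $s_a\le r-1$ keeps this count nonnegative. By Wagner's construction every admissible $H$ is realized by some $r$-differential poset, so the two maps are mutually inverse.

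The engine for the remaining parts is the identity $\sum_{F\ni a}(|F|-1)=r-1$ for each fixed $a$, which holds because the hyperedges through $a$ pairwise meet only in $a$ and together account for every pair $\{a,b\}$, hence their ``$a$-removed'' parts partition $V\setminus\{a\}$. Counting singletons vertex by vertex then gives $p_2=(\#\text{hyperedges})+\sum_a\bigl((r+1)-s_a\bigr)=r(r+1)-\sum_F(|F|-1)=r(r+1)-T_1$, which is (2); in particular maximizing $p_2$ is minimizing $T_1$ and vice versa. For (3) I would use $T_1=\sum_{F}(|F|-1)\ge\sum_{F\ni a}(|F|-1)=r-1$, with equality iff every hyperedge contains the fixed vertex $a$; a short argument via Lemma \ref{cover} shows this forces a single hyperedge equal to $V$, i.e. the $(r-1)$-simplex $Z(r)$, so $p_2=r^2+1$. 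For (5), summing over hyperedges counts each pair once, giving $\sum_F\binom{|F|}{2}=\binom r2$; since $|F|-1\le\binom{|F|}{2}$ with equality iff $|F|=2$, we get $T_1\le\binom r2$ with equality iff $H=K_r$ (the hypergraph of $Y^r$), so $p_2=\binom{r+2}{2}-1$.

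Part (4) is where the real work lies. Let $M$ be the largest hyperedge size; non-simplex means $2\le M\le r-1$, and I would fix a maximum hyperedge $F_0$. The key step is to count the ``cross pairs'' between $F_0$ and the $r-M$ outside vertices: each pair $\{u,v\}$ with $u\in F_0$, $v\notin F_0$ is covered by a hyperedge that, by Lemma \ref{cover}, meets $F_0$ in exactly the single vertex $u$, so such a hyperedge covers precisely $|F|-1$ cross pairs; summing, these hyperedges contribute exactly $M(r-M)$ to $T_1$, disjointly from the contribution $M-1$ of $F_0$ itself. This yields $T_1\ge (M-1)+M(r-M)$, and the algebraic identity
\[
(M-1)+M(r-M)=(2r-3)+(M-2)(r-1-M)
\]
shows the right-hand side is $\ge 2r-3$ throughout $2\le M\le r-1$. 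Tracing the equality conditions forces (for $r\ge 4$) $M=r-1$ and precisely one $(r-2)$-dimensional hyperedge together with $r-1$ edges, giving $p_2=r(r+1)-(2r-3)=r^2-r+3$. I expect (4) to be the main obstacle: the naive estimates---the coverage bound $T_1\ge r(r-1)/\binom M2$ and the single-star bound $T_1\ge M+r-2$---are both too weak to rule out intermediate sizes $M$ of order $r$, and it is exactly the cross-pair count (leaning on Lemma \ref{cover}) that produces the sharp quadratic whose minimum over the admissible range is $2r-3$. I would also flag the small cases separately, since for $r=2$ the range is empty and for $r=3$ the two extremal configurations coincide with $K_r$.
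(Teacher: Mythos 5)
Your proposal is correct and follows the same overall strategy as the paper: identify $P_1$ with $V$, read off hyperedges as the down-sets of rank-two elements covering at least two elements, derive $p_2=r(r+1)-T_1$ by counting covers of rank-one elements, and then turn (3)--(5) into extremal statements about $T_1$. Two of your sub-arguments differ from (and arguably improve on) the paper's. First, for part (4) the paper asserts the key bound --- that a hyperedge of cardinality $c$ forces a contribution of at least $(c-1)+c(r-c)$ to $T_1$ --- as ``a simple exercise''; your cross-pair count (each pair $\{u,v\}$ with $u\in F_0$, $v\notin F_0$ lies in a unique hyperedge meeting $F_0$ only in $u$, and such a hyperedge covers exactly $|F|-1$ cross pairs, so these hyperedges contribute exactly $M(r-M)$, disjointly from $F_0$'s own $M-1$) is precisely the missing exercise, and your factorization $(M-1)+M(r-M)=(2r-3)+(M-2)(r-1-M)$ matches the paper's $(c-(r-1))(c-2)\le 0$. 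Second, for part (5) the paper replaces a $c$-edge by $\binom{c}{2}$ edges and inducts; your direct double count $\sum_F\binom{|F|}{2}=\binom{r}{2}$ combined with $|F|-1\le\binom{|F|}{2}$ reaches the same conclusion in one step and gives the uniqueness of $K_r$ for free. Your local identity $\sum_{F\ni a}(|F|-1)=r-1$, which also yields (3) with its equality analysis, is likewise a clean packaging not made explicit in the paper. The only points worth tightening are minor: you should still verify (as the paper does) that $K_r$ is indeed the hypergraph of $Y^r$ and that the simplex is that of $Z(r)$, and note that the realizability of every admissible $H$ (via Wagner) is what makes the extremal values actually attained.
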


\begin{proof}
(1). Given an $r$-differential poset $P$, identify the elements of $P_1$ with the integers $1,\dots, r$, and an element $x\in P_2$ with the subset $\{a_1,\dots ,a_t\}$ of $V=\{1,\dots, r\}$, if and only if $t\geq 2$ and $a_1,\dots ,a_t$  are the elements of $P_1$ covered by $x$. (Thus, notice that we are not considering here elements of rank 2 covering a unique element.) That such subsets of $V$ are the hyperedges of a hypergraph with the desired properties is immediate to check, using the two axioms of Definition \ref{diff}. The fact that this is a bijection follows from axiom (ii), since every rank one element of an $r$-differential poset $P$ is covered by exactly $r+1$ elements. This easily determines uniquely the structure of $P_{[1,2]}$ corresponding to a hypergraph as in the statement.

(2). Since every element of $P_1$ is covered by exactly $r+1$ elements,  $p_2$ equals $r(r+1)$ minus a sum of contributions coming from the elements of $P_2$ covering at least two elements, i.e., by part (1), the hyperedges of the hypergraph $H$ associated with $P_{[1,2]}$. But it is immediate to see that each hyperedge of $H$ contributes to that sum by exactly  its cardinality minus 1, that is, its dimension.

(3) and (4). If $r=1$, part (3) is trivial. Hence suppose $r>1$. In order to prove both statements, it is enough to show that the value of $T_1$ for the $(r-1)$-simplex is $r(r+1)-(r^2+1)=r-1$, and  the value of $T_1$ for any other admissible hypergraph is at least $r(r+1)-(r^2-r+3)=2r-3$. 

The first fact is entirely obvious. As for the second, it is a simple exercise to check, using axiom (i)  of Definition \ref{diff}, that if an admissible hypergraph contains a hyperedge of cardinality $c$, then it must contribute to $T_1$ by at least $(c-1)+c(r-c)$. But $(c-1)+c(r-c)\geq 0$ if and only if
\begin{equation}\label{ccc}
c^2-(r+1)c+2r-2=(c-(r-1))(c-2)\leq 0,
\end{equation}
which is true for all admissible  hypergraphs except the $(r-1)$-simplex. 

Since equality (\ref{ccc}) is satisfied for $c=r-1$, it follows that $p_2=r^2-r+3$ can (only) be achieved by a hypergraph $H$ with a unique $(r-2)$-dimensional hyperedge and $r-1$ hyperedges of dimension one; that is, up to isomorphism, $H$ is the hypergraph with hyperedges: $\{1,2,\dots  ,r-1\}, \{1,r\}, \{2,r\}, \dots ,\{r-1,r\}$. 

(5). A simple computation gives that the value of $p_2$ for $Y^r$ is $p_2=\binom{r+2}{2}-1$, while the fact that the  hypergraph associated with $Y^r$ is $K_r$ follows by noticing that no rank two element in $Y^r$ can cover more than two elements. In order to show  that $Y^r$ yields the smallest possible value for $p_2$, or equivalently, the largest possible value for  $T_1=r(r+1)- p_2$, consider any admissible hypergraph having a hyperedge of cardinality $c$. It is easy to see that this hyperedge can be replaced by $\binom{c}{2}$ 1-dimensional hyperedges covering the same set of $c$ vertices, and  leaving the rest of the hypergraph unchanged. Since $\binom{c}{2}\cdot 1\geq 1\cdot (c-1)$ for any $c\geq 2$, the result  easily follows by induction.
\end{proof}

\begin{example}\label{4}
When $r=2$, it is clear that there is (up to isomorphism) a unique possible rank-selected subposet $P_{[1,2]}$ for an $r$-differential poset, where the two rank one elements are covered by three elements each, one of which is in common. The hypergraph associated with $P_{[1,2]}$ is the 1-simplex, which is isomorphic to $K_2$. Notice that, when $r=2$, $r^2+1= r^2-r+3=\binom{r+2}{2}-1=5,$ which is indeed the only possible value of $p_2$.

As for $r=3$, there are, according to Proposition \ref{simp}, part (1), only two nonisomorphic admissible  hypergraphs: the 2-simplex, giving $p_2=10$; and $K_3$, giving $p_2=9$.

When $r=4$, there are up to isomorphism three admissible hypergraphs over $V=\{1,2,3, 4\}$: the 3-simplex; $K_4$; and (see Figure 1) the hypergraph $H $ whose hyperedges are $\{1,2,3\}$, $\{1,4\}$, $\{2,4\}$, $\{3,4\}$. The corresponding values of $p_2$, which are therefore the only ones allowed for a 4-differential poset, are 17, 14, and 15, respectively.

When $r=5$, there are up to isomorphism five admissible  hypergraphs. The corresponding values of $p_2$ are: 20, 21, 22, 23, 26. 

As for $r=6$, the admissible nonisomorphic  hypergraphs are ten. The corresponding values of $p_2$ are: 27, 28, 29 (occurring twice), 30 (twice), 31 (twice), 33, 37.
\end{example}
\begin{figure}[h!t]
\includegraphics[scale=0.80]{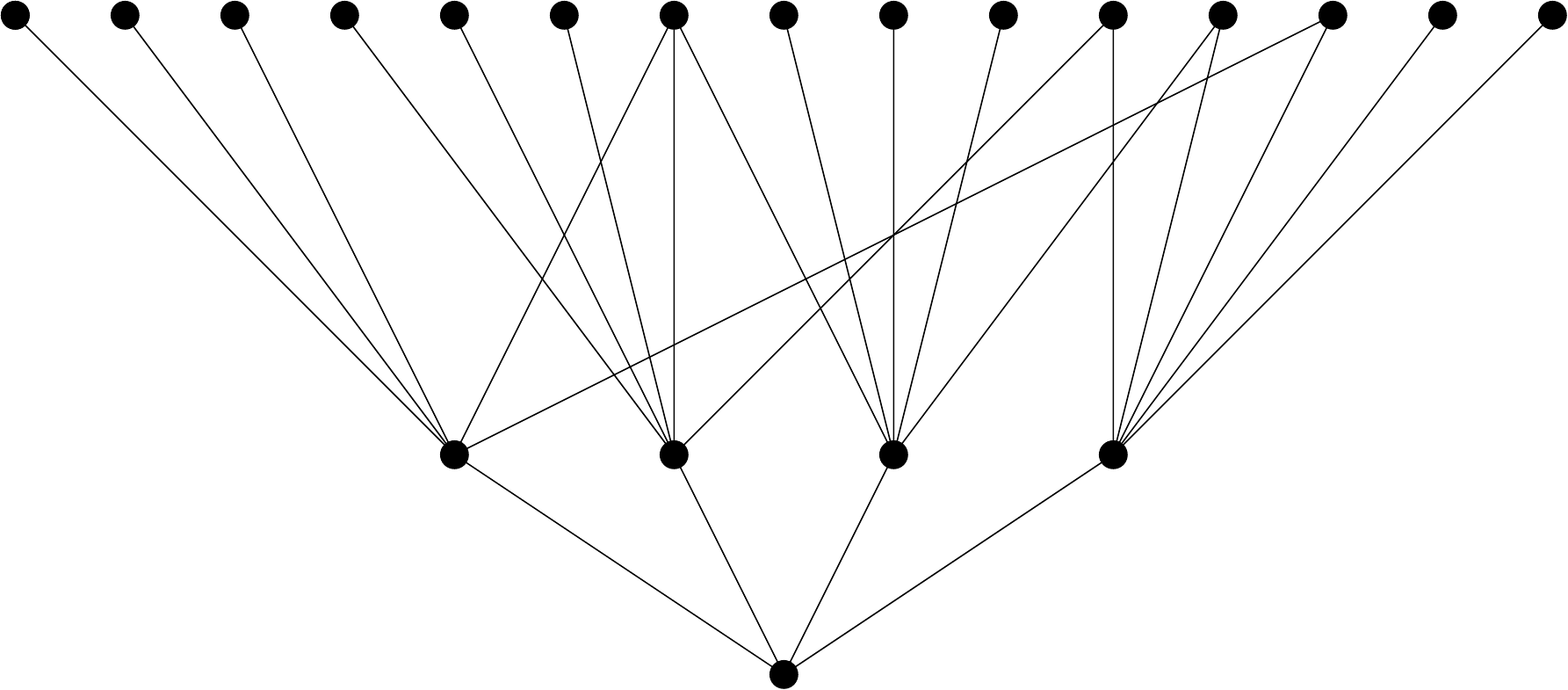} \caption{The Hasse diagram, up to rank two, of a 4-differential poset with rank function starting $1,4,15,\dots $, corresponding to the hypergraph $H $ of Example \ref{4}.}
\end{figure}

\begin{remark}\label{pp}
Since, by Proposition \ref{simp}, the  hypergraphs $H$, associated with the rank-selected subposets $P_{[1,2]}$ of our $r$-differential posets, are precisely those (with all hyperedges of positive dimension) for which every 2-subset of the vertex set is contained in exactly one hyperedge, the posets $P_{[1,2]}$ can also be given, as a special case, a  natural interpretation in terms of objects of particular interest in design theory and finite geometries. 

Recall that a \emph{Steiner system} $S(l,m,r)$ is an $r$-element set $V$, together with a collection of $m$-subsets of $V$ (called \emph{blocks}) having the property that every $l$-subset of $V$ is contained in exactly one block. Perhaps the best known family of examples  is that of \emph{finite projective planes}, which are the Steiner systems  $S(2,q+1,q^2+q+1)$, where $q\geq 2$ is called the \emph{order} of the plane. A major open problem in  geometry is a classification of all finite projective planes, or even of the possible values that $q$ can assume. (Conjecturally, $q$ may only be the power of a prime.) We refer to, e.g., \cite{De} for an introduction to this area.

It  follows from Proposition \ref{simp} that classifying the rank-selected subposets $P_{[1,2]}$ of  $r$-differential posets will imply a characterization of all Steiner systems $S(2,m,r)$ --- which correspond to the very special case where the hypergraphs associated with the $P_{[1,2]}$ are uniform --- and in particular, it will imply a characterization of all finite projective planes.
\end{remark}

\begin{corollary}\label{6}
If and only if $r\geq 6$, there exist $r$-differential posets, say $P$ and $Q$, having the same rank function, but such that all corresponding rank-selected subposets $P_{[a,a+1]}$ and $Q_{[a,a+1]}$ are nonisomorphic, for any $a>0$.
\end{corollary}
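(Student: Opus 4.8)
The plan is to treat the two implications separately, reserving the real work for the existence (``if'') direction.

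\textbf{The ``only if'' direction.} Suppose $r\le 5$ and that $P,Q$ are $r$-differential posets with the same rank function; in particular $p_2^P=p_2^Q$. By Proposition \ref{simp}(1) the subposets $P_{[1,2]}$ and $Q_{[1,2]}$ are isomorphic exactly when their associated hypergraphs are. The point is that the enumeration carried out in Example \ref{4} shows that for each $r\le 5$ the admissible hypergraphs have pairwise distinct values of $p_2$ (the first coincidence occurring only at $r=6$, where $p_2=29,30,31$ each arise twice). Hence a common value of $p_2$ forces $P_{[1,2]}\cong Q_{[1,2]}$, so the required nonisomorphism already fails at $a=1$, and no such pair can exist.

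\textbf{The ``if'' direction: the seed.} For every $r\ge 6$ I would produce two nonisomorphic admissible hypergraphs with the same $T_1$. Take $H_1$ to consist of the two vertex-disjoint triples $\{1,2,3\}$ and $\{4,5,6\}$ together with every $2$-subset of $V$ not lying inside one of them, and $H_2$ to consist of the two triples $\{1,2,3\}$ and $\{3,4,5\}$, sharing the vertex $3$, together with every remaining $2$-subset. Both cover each pair exactly once and have all hyperedges of positive dimension, so they are admissible; both have two $2$-dimensional hyperedges and the same number $\binom{r}{2}-6$ of edges, hence the same $T_1=\binom{r}{2}-2$ and, by Proposition \ref{simp}(2), the same $p_2$. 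They are nonisomorphic because $H_2$ has a vertex lying in two triples while $H_1$ does not. Via Proposition \ref{simp}(1) these give $r$-differential posets up to rank $2$, namely $P_{[0,2]}\not\cong Q_{[0,2]}$ sharing $p_0,p_1,p_2$; applying Wagner's construction to each and iterating produces $r$-differential posets $P,Q$ whose rank functions obey the same recurrence $p_n=rp_{n-1}+p_{n-2}$ from identical initial data, hence coincide.

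\textbf{The ``if'' direction: propagation, and the main obstacle.} It remains to show $P_{[a,a+1]}\not\cong Q_{[a,a+1]}$ for every $a\ge1$. The subtlety here is that the obvious invariants fail: one checks that $P$ and $Q$ have identical down-degree sequences in every rank, so no degree statistic can separate them. Instead I would exploit the structure of Wagner's construction directly. For $a\ge 2$ the elements of rank $a+1$ split into the Wagner copies $w_i$, indexed by the rank-$(a-1)$ elements $y_i$ and covering exactly those rank-$a$ elements that cover $y_i$, and the $rp_a$ new elements, each covering a single rank-$a$ element. Using Definition \ref{diff}(ii), every $w_i$ and every rank-$a$ element has degree at least $r\ge 6$ inside $P_{[a,a+1]}$, whereas each new element has degree $1$; thus the degree-$1$ elements of $P_{[a,a+1]}$ are precisely the new ones. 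Since a poset isomorphism preserves rank and degree, any isomorphism $P_{[a,a+1]}\to Q_{[a,a+1]}$ carries new elements to new elements, and therefore restricts to an isomorphism of the subposets spanned by the rank-$a$ elements and the $w_i$. But that subposet is, under $w_i\leftrightarrow y_i$, exactly $P_{[a-1,a]}$; so $P_{[a,a+1]}\cong Q_{[a,a+1]}$ would force $P_{[a-1,a]}\cong Q_{[a-1,a]}$. Taking the contrapositive and inducting from the base case $a=1$ (the nonisomorphic seed) yields nonisomorphism at every consecutive pair. The main obstacle is precisely this propagation step: recognizing the canonical stripping of the degree-$1$ elements as the operation that recovers the previous two-rank subposet, which is what lets a single nonisomorphism at the bottom persist forever.
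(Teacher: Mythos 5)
Your proposal is correct and follows essentially the same route as the paper: reduce to producing, for each $r\geq 6$, two nonisomorphic admissible hypergraphs with the same dimension-sum $T_1$ (and observe via Example \ref{4} that none exist for $r\leq 5$), then iterate Wagner's construction. The only differences are refinements of detail --- you give a single explicit seed (two triples, disjoint versus sharing a vertex) valid for all $r\geq 6$ where the paper inducts from the $r=6$ case by attaching a new vertex with $1$-dimensional hyperedges, and you spell out, via the stripping of the degree-one maximal elements, the propagation of nonisomorphism that the paper simply declares ``clear.''
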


\begin{proof} Let us consider two posets $P_{[1,2]}$ and $Q_{[1,2]}$ corresponding, by Proposition \ref{simp}, to nonisomorphic hypergraphs over the same vertex set, whose sum of the dimensions of the hyperedges is the same (i.e., they give the same value of $p_2$). As we have seen in Example \ref{4}, such hypergraphs exist when $r=6$, and for no value of $r\le 5$. For any $r>6$, they can be constructed inductively as follows. Suppose that $H$ and $H'$ are admissible nonisomorphic  hypergraphs over $\{1, 2, \dots , r-1\}$. Then it is easy to see that we  obtain two admissible nonisomorphic hypergraphs over $\{1, 2, \dots , r\}$ simply by  adding, to both $H$ and $H'$, the 1-dimensional hyperedges $\{1,r\}, \{2,r\}, \dots ,\{r-1,r\}$. 

If we now construct two $r$-differential posets, $P$ and $Q$, by infinitely iterating Wagner's construction starting with the nonisomorphic posets $P_{[1,2]}$ and $Q_{[1,2]}$, it is clear that no $P_{[a,a+1]}$ is isomorphic to $Q_{[a,a+1]}$, for any $a>0$, while of course the two rank functions coincide, as desired.
\end{proof}

\begin{remark}\label{ppp}
The proof of Corollary \ref{6} implies, in light of Remark \ref{pp}, the existence of very interesting families of nonisomorphic differential posets with the same rank functions, namely those  obtained by infinitely iterating Wagner's construction starting with nonisomorphic projective planes of the same order. For instance, the smallest order $q$ allowing the existence of  nonisomorphic projective planes is  $9$, where the four possible distinct classes of isomorphism  were already known to Veblen \cite{VW}. This gives rise to four 91-differential posets having the same rank function which are nonisomorphic in any corresponding rank-selected subposets.
\end{remark}

\begin{remark}\label{alpha} 
\begin{enumerate}
\item[(i)] Given an $r$-differential poset $P$, one can associate inductively, in an entirely similar fashion, a hypergraph $H_n$, having all hyperedges of positive dimension, to any rank-selected subposet $P_{[n,n+1]}$ of $P$. The elements of $P_n$ make the vertex set $V_n$ of $H_n$, a 2-subset of $V_n$ is contained in a hyperedge of $H_n$ if and only if the corresponding elements of $P_n$ cover a common element of $P_{n-1}$, and thanks to Lemma \ref{cover},  no 2-subset of $V_n$ belongs to more than one hyperedge of $H_n$. 

Obviously however, handling the hypergraphs $H_n$ for higher values of $n$ is generally even more complicated than it is for $n=1$, in that the structure of $P_{[n,n+1]}$ also heavily depends on the structure of the previous rank-selected subposets. We only notice here that, if we let  $T_n$ denote the sum of the dimensions of the hyperedges of the hypergraph associated with $P_{[n,n+1]}$, from the simple equality
$$p_{n+1}=\sum_{F {\ }\text{hyperedge of} {\ }H_{n-1}}\left(|F|+r\right)+(r+1)\cdot \left(p_n-\sum_{F {\ }\text{hyperedge of}{\ } H_{n-1}}1\right)-T_n,$$
 by induction one  easily obtains 
\begin{equation}\label{tn}
T_n=r\sum_{j=0}^np_j-p_{n+1}.
\end{equation}

Notice that formula (\ref{tn}) generalizes the formula for $T_1$ of part (2) of Proposition \ref{simp}. Also, if following the notation of \cite{St}, we define $\alpha (n\rightarrow n+1)$ to be the number of  pairs $(x,y)\in P_{n+1}\times P_n$ such that $x$ covers $y$, one moment's thought gives that:
$$p_{n+1}=\alpha (n\rightarrow n+1)-T_n.$$
In particular, this latter equality, combined with formula (\ref{tn}), yields another proof of the following useful fact (which  will  be employed in a different context in the next section):
\begin{equation}\label{alpha1}
\alpha (n\rightarrow n+1)=r\sum_{j=0}^np_j.
\end{equation}

This result was first proved by the first author (it is essentially the case $k=1$ of equality (16) of \cite{St}, Theorem 3.2, after equating the coefficients of the generating functions).
\item[(ii)] By formula (\ref{tn}), given $p_1,\dots ,p_n$, minimizing the value of $p_{n+1}$ is tantamount to maximizing that of $T_n$. Notice, however, that the argument of Proposition \ref{simp}, part (5), cannot in general translate straightforwardly to the case $n>1$. Indeed, it is easy to see that, unlike for $H_1$, replacing a hyperedge  of cardinality $c$ of $H_n$ with hyperedges of lower cardinality --- an operation always consistent with axiom (i) of Definition \ref{diff} --- can in general violate axiom (ii).
\end{enumerate}
\end{remark}

The \emph{Interval Property} was first introduced by the second author \cite{Za} in the context of combinatorial commutative algebra, where he conjectured its existence for the set of Hilbert functions of level --- and, in a suitably symmetric fashion, Gorenstein --- algebras (we refer to \cite{Za} for the relevant definitions). This property says that if two  sequences, $h$ and $h'$, of a given class $S$ of integer sequences, coincide in all entries but one, say $h=(h_0,\dots ,h_{i-1},h_i,h_{i+1},\dots )$ and $h'=(h_0,\dots ,h_{i-1},h_i+\alpha ,h_{i+1},\dots )$ for some index $i$ and some positive integer $\alpha $, then the sequence $(h_0,\dots ,h_{i-1},h_i+\beta ,h_{i+1},\dots )$ is also in $S$, for each  $\beta =0,1,\dots , \alpha .$

The Interval Property, which is still wide open for level and Gorenstein algebras, has then also been  conjectured in \cite{BMMNZ}  for other important sequences arising in combinatorics or combinatorial algebra, namely for pure $O$-sequences and for the $f$-vectors of pure simplicial complexes. As for pure $O$-sequences, in \cite{BMMNZ} the property has been shown to hold for all sequences of length 4 (a result that also led to a proof of the first author's matroid $h$-vector conjecture in rank 3; see \cite{HSZ}), while most recently, it has been disproved in the four variable case by A. Constantinescu and M. Varbaro; for pure $f$-vectors, the property is still wide open, and little progress has been made to date. The Interval Property is well-known to hold, for instance, for  the Hilbert functions of graded algebras of any Krull dimension, for the $f$-vectors of arbitrary simplicial complexes, or for the $h$-vectors  of Cohen-Macaulay (or shellable) simplicial complexes, while it fails, e.g., for matroid $h$-vectors (see \cite{BMMNZ,St3} for details). In the contexts where the Interval Property has been conjectured, an explicit characterization of the desired sequences seems to be out of reach, and although it is often very challenging to prove or disprove, when confirmed it appears to be both a consequential property and one of the strongest structural results that one might hope to achieve for the set of such sequences. 

Notice that  rank functions of differential posets can  be  identified with Hilbert functions of certain graded vector spaces (see \cite{St}, Section 2), and that, as we have seen earlier in this section, an explicit characterization of the set $\mathbb X$ of all possible rank functions of differential posets seems  nearly impossible. Therefore it would be extremely interesting if the Interval Property might somehow extend to this context. Unfortunately however, the following result shows  that, at least in general,  rank functions of differential posets do not enjoy the Interval Property. (On the other hand, see Question \ref{5} below and the comment following it.)

\begin{theorem}\label{ip}
The Interval Property does not hold for the set $\mathbb X$. More precisely, the two sequences
$$\textbf{p}'{\ }: {\ }1,4,14,p_3=60,p_4=254,p_5,p_6,\dots $$
and
$$\textbf{p}''{\ }: {\ }1,4,17,p_3=60,p_4=254,p_5,p_6,\dots ,$$
where $p_i=4p_{i-1}+p_{i-2}$ for all $i\geq 5$, belong to $\mathbb X$, while $$\textbf{p}'''{\ }: {\ }1,4,16,p_3=60,p_4=254,p_5,p_6,\dots $$
does not.
\end{theorem}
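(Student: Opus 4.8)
The plan is to note first that every one of the three sequences begins $1,4$, so throughout we are dealing with $4$-differential posets, and all claims reduce to statements about such posets. The negative half is then immediate. By Proposition~\ref{simp} together with the enumeration carried out in Example~\ref{4}, the only possible values of $p_2$ for a $4$-differential poset are $14,15,17$. Since $\textbf{p}'''$ requires $p_2=16$, no $4$-differential poset can have $\textbf{p}'''$ as its rank function regardless of its higher entries, so $\textbf{p}'''\notin\mathbb X$. This already exhibits the forbidden middle value, so the crux is to place the two endpoints $\textbf{p}'$ and $\textbf{p}''$ in $\mathbb X$.

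For $\textbf{p}'$ I would start from the (unique, by Proposition~\ref{simp}(1)) $4$-differential poset up to rank $2$ whose associated hypergraph is $K_4$, equivalently the rank-$\le 2$ truncation of $Y^4$, which has $p_2=14$. Iterating Wagner's construction from rank $2$ onward forces $p_n=4p_{n-1}+p_{n-2}$ for all $n\ge 3$; one checks $p_3=4\cdot 14+4=60$ and $p_4=4\cdot 60+14=254$, after which the recurrence simply continues, reproducing $\textbf{p}'$ exactly. Hence $\textbf{p}'\in\mathbb X$.

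The substantial case is $\textbf{p}''$. Here $p_2=17$ forces, again by Proposition~\ref{simp}, the rank-$[1,2]$ hypergraph to be the $3$-simplex, the configuration of $Z(4)$; and a naive use of Wagner from rank $2$ would give $p_3=4\cdot 17+4=72$, not $60$. So instead I would build a $4$-differential poset explicitly up to rank $4$ and only then switch to Wagner (from rank $4$ on, which yields the required tail $p_5=4\cdot 254+60=1076,\dots$). The mechanism is the one in Remark~\ref{alpha}(i): extending a poset from rank $n$ to rank $n+1$ amounts to choosing the hypergraph $H_n$, whose hyperedges must partition the edge set of the shared-lower-cover graph $G_n$ on $P_n$ into cliques, subject only to the axiom-(ii) constraint that each $x\in P_n$ lie in at most $m_x+r$ hyperedges (with $m_x$ the number of elements $x$ covers); then $p_{n+1}=r\sum_{j=0}^n p_j-T_n$ by formula~(\ref{tn}). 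Thus I must realize $T_2=4(1+4+17)-60=28$ and then $T_3=4(1+4+17+60)-254=74$ by admissible clique partitions. Both exceed the corresponding Wagner values ($T_2=16$, $T_3=71$), by $12$ and $3$ respectively, and the excess is produced by \emph{splitting} Wagner's large hyperedges into smaller cliques, which strictly raises $T_n$. For $H_2$ this is concrete: in the $3$-simplex, $G_2$ consists of the unique element $B$ covering all of $P_1$, joined to the sixteen singleton rank-$2$ elements, which split into four groups of four according to which element of $P_1$ they cover. Replacing three of Wagner's four maximal cliques $\{B\}\cup(\text{group }k)$ by the split $\{B,s_{k,1},s_{k,2}\},\{B,s_{k,3},s_{k,4}\}$ plus the four remaining intra-group edges raises $T_2$ from $16$ to $28$ while leaving $B$ in only $7\le m_B+r=8$ hyperedges, so axiom (ii) holds; this gives $p_3=60$. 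A milder adjustment one level up, splitting a single Wagner clique of $G_3$ to gain the extra $3$, yields $T_3=74$ and $p_4=254$.

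The main obstacle is exactly this realizability: one must guarantee that the prescribed $T_2=28$ and $T_3=74$ are simultaneously attainable by legitimate clique partitions, i.e.\ that the \emph{large} required value of $T_2$ does not collide with the axiom-(ii) degree bounds, and that the poset produced at rank $3$ still admits a partition of $G_3$ with $T_3=74$. The rank-$2$ step is the tight one, since $B$ has only $8$ units of upward room to absorb the splitting, and the explicit three-groups-split above is designed precisely to stay within that budget; the rank-$3$ step, needing only $T_3=71+3$, is a routine perturbation of Wagner once $G_3$ is understood, requiring only that some Wagner clique have the slack to be partially split. After these two finite verifications the poset exists up to rank $4$, Wagner's construction extends it to an honest $4$-differential poset, and $\textbf{p}''\in\mathbb X$, so that the presence of the endpoints $14$ and $17$ together with the absence of $16$ defeats the Interval Property for $\mathbb X$.
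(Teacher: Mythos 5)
Your proof is correct and follows essentially the same route as the paper: the negative part and the case of $\textbf{p}'$ are handled identically, and for $\textbf{p}''$ you perform the same two-step surgery on $Z(4)$ --- removing $12$ elements between ranks $2$ and $3$ and then $3$ more between ranks $3$ and $4$ before switching to Wagner's construction --- that the paper carries out via the explicit Hasse-diagram replacements of Figures 2 and 3. The only difference is presentational: you encode each surgery as a re-partition of the edges of the common-cover graph $G_n$ into cliques (splitting Wagner's large cliques to raise $T_n$ by exactly $12$ and then $3$, within the axiom-(ii) degree budgets), which is an equivalent and fully checkable reformulation of what the paper's figures display.
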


\begin{proof} In order to show that \emph{$\textbf{p}'\in \mathbb X$}, it suffices to consider a differential poset whose rank function begins $1,4,14,\dots $, which exists from Proposition \ref{simp} (see also Example \ref{4}). Then, by infinitely iterating Wagner's construction starting in rank two, we clearly obtain a differential poset with rank function \emph{$\textbf{p}'$}. That \emph{$\textbf{p}'''\notin  \mathbb X$} follows from the fact that no rank function of a differential poset can begin $1,4,16,\dots $ (see again Proposition \ref{simp} and Example \ref{4}). Therefore, it remains to construct a differential poset having rank function \emph{$\textbf{p}''$}.

In order to do this, let us start by considering the  Fibonacci $4$-differential poset, $Z(4)$, up to rank three. Its rank function is $1,4,17,72$. We want to replace a suitable portion of the Hasse diagram of $Z(4)$ between ranks two and three with another diagram so that the resulting poset has the same elements in rank two, and 12 fewer elements in rank three. Notice that the new Hasse diagram thus constructed will be the Hasse diagram of some differential poset up to rank three if and only if, for any  element of rank two, the number of elements of rank three covering it is unchanged (using axiom (ii) of Definition \ref{diff}), and two elements of rank two  have some common cover  if and only if they had some common cover in $Z(4)$ (using axiom (i)). 

How to perform this operation is described in Figure 2, where the first diagram is the portion of $Z(4)$ being removed, and the second diagram is its replacement. (The convention adopted in Figures 2 and 3 is that the elements of higher rank do not cover other elements outside of those displayed in the diagrams, while the elements of lower rank may also be covered by other elements.)
\begin{figure}[h!t]
\includegraphics[scale=0.44]{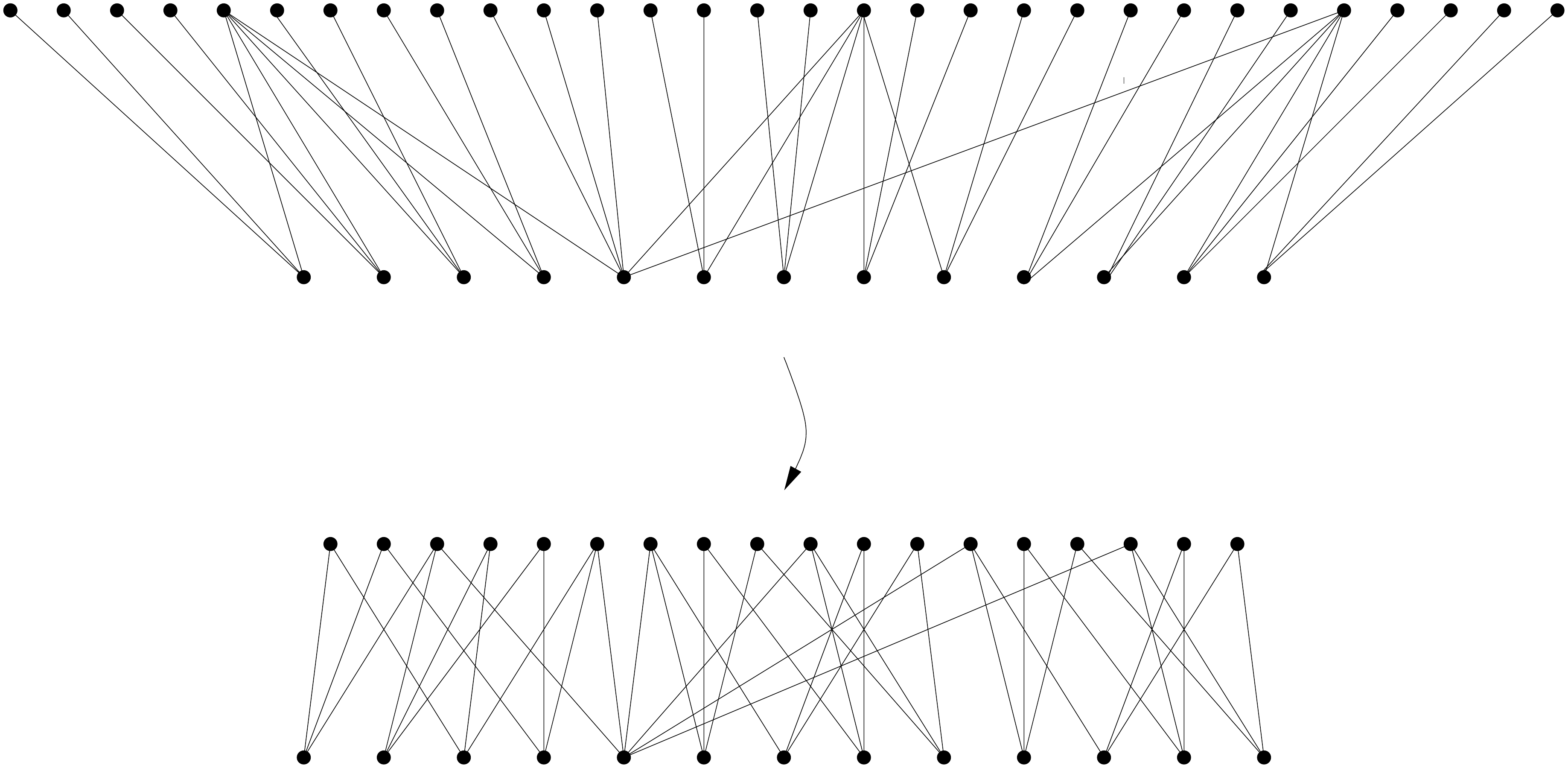} \caption{The Hasse diagram of the portion of $Z(4)$ being replaced in Theorem \ref{ip} (above), and that of its replacement (below).}
\end{figure}

Hence we have now obtained a new differential poset up to rank three, with rank function  $1,4,17,60$. By Wagner's construction, this poset can be extended by one rank to a differential poset up to rank four, say $F$, having rank function  $1,4,17,60, 257$. If, similarly, we can now replace a portion of the Hasse diagram of $F$ between ranks three and four and obtain a new differential poset up to rank four, say $G$, having 3 fewer elements of rank four than $F$, then we are done. Indeed, $G$ will have rank function  $1,4,17,60, 254$, hence  by infinitely iterating Wagner's construction we will get a differential poset with rank function \emph{$\textbf{p}''$}, as desired.

The operation transforming $F$ into $G$ in ranks three and four is described in Figure 3; notice that the first diagram does indeed represent a portion of the Hasse diagram of $F$, as it is easy to check. This completes the proof of the theorem.
\end{proof}
{\ }\\
\begin{figure}[h!t]
\includegraphics[scale=0.485]{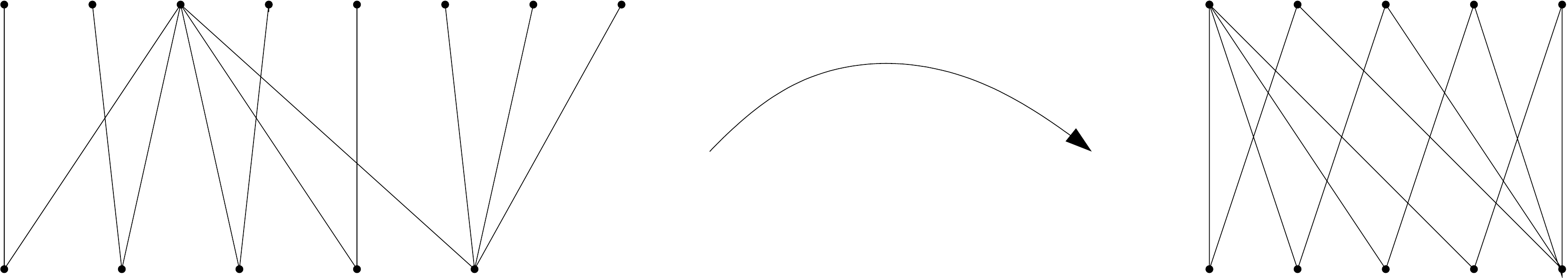} \caption{The Hasse diagram of the portion of the differential poset $F$ being replaced in Theorem \ref{ip} (left), and that of its replacement (right), giving rise to $G$.}
\end{figure}

\section{The nonpolynomial growth of $p_n$}

The object of this section is to prove that the rank function  of any differential poset has nonpolynomial growth. Recall the well-known Hardy-Ramanujan asymptotic formula for the partition function \cite{HR},
$$p(n)\sim \frac{1}{4n\sqrt{3}}e^{\pi\sqrt{2n/3}}.$$
 
More generally, for any given $r\geq 1$, the rank function $p_r(n)$ of $Y^r$ is clearly determined by the functional equation  $\sum_{n\geq 0} p_r(n)q^n=(\sum_{n\geq 0} p(n)q^n)^r$.  Using a classical theorem of asymptotic combinatorics due to Meinardus \cite{Me} and some basic properties of the Riemann zeta function, it is then a fairly standard task, of which we omit the details, to prove that
$$p_r(n)\sim Cn^{\alpha }e^{\pi\sqrt{2rn/3}},$$
for some suitable constants $C$ and $\alpha$.

The main goal of this section is to show that, for any arbitrary $r$-differential poset, its rank function  satisfies the asymptotic lower bound 
$$p_n\gg n^ae^{2\sqrt{rn}},$$
for some constant $a$. Notice that $\pi\sqrt{2/3}=2.56...$, which implies that our result is very close to being optimal, and in the much greater level of generality of differential posets. Notice also that the above asymptotic formula for $p(n)$ was first proved by Hardy and Ramanujan using modular forms and the circle method of additive number theory, while in a famous  Annals of Math. paper \cite{Er}, Erd\H{o}s was able to reprove the Hardy-Ramanujan formula, up to a multiplicative constant,  without making use of complex analytic techniques.

Let $P=\bigcup_{n\geq 0}P_n$ be any  $r$-differential poset with rank function \emph{$\textbf{p}{\ }: {\ }p_0=1,p_1=r,p_2,\dots $}. Following  the notation of \cite{St}, define $\kappa (n \rightarrow n+1 \rightarrow n \rightarrow n+1 \rightarrow n)$ to be the number of closed Hasse walks $x_1<x_2>x_3<x_4>x_5=x_1$, where $x_1,x_3\in P_n$ and $x_2,x_4\in P_{n+1}$. Also, denote by $\alpha (n \rightarrow n+1 \rightarrow n)$  the number of Hasse walks $x_1<x_2>x_3$, where $x_1,x_3\in P_n$ and $x_2\in P_{n+1}$, and let $\alpha (n \rightarrow n+1)$, as in the previous section, be the number of Hasse walks $x_1<x_2$, where $x_1\in P_n$ and $x_2\in P_{n+1}$. Finally, for any $x\in P_n$, denote by $c(x)$ the number of covers of $x$. 

Our first key lemma is:

\begin{lemma}\label{kappa}
For any $r$-differential poset $P$ and any nonnegative integer $n$,
$$\sum_{x\in P_n}c(x)^2=\kappa (n \rightarrow n+1 \rightarrow n \rightarrow n+1 \rightarrow n)-\alpha (n \rightarrow n+1 \rightarrow n)+\alpha (n \rightarrow n+1).$$
\end{lemma}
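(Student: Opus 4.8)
The plan is to count the same set of structures in two ways: closed Hasse walks of the shape $x_1 < x_2 > x_3 < x_4 > x_5 = x_1$ with $x_1, x_3 \in P_n$ and $x_2, x_4 \in P_{n+1}$, organized according to whether the two ``bounces'' in rank $n+1$ use the same element or distinct elements, and according to whether $x_1 = x_3$ or $x_1 \neq x_3$. Specifically, I would first observe that the left-hand side $\sum_{x \in P_n} c(x)^2$ has a direct walk interpretation: for a fixed $x = x_1 = x_3 \in P_n$, the number of ways to choose $x_2$ covering $x_1$ and $x_4$ covering $x_3$ (with $x_2, x_4$ allowed to coincide) is exactly $c(x)^2$. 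So $\sum_{x \in P_n} c(x)^2$ counts the walks of the above shape in which $x_1 = x_3$ but $x_2, x_4$ are unconstrained (they need not be distinct). Call these the \emph{degenerate-base} walks.

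Next I would decompose $\kappa(n \to n+1 \to n \to n+1 \to n)$, the total number of such closed walks, by the same two binary conditions. The walks with $x_2 = x_4$ are precisely those where we pick $x_2 \in P_{n+1}$ and then choose $x_1, x_3$ among the elements of $P_n$ covered by $x_2$; summing over $x_2$ this is $\sum_{y \in P_{n+1}} d(y)^2$ where $d(y)$ is the number of elements $y$ covers, and this in turn equals $\alpha(n \to n+1 \to n)$ if we only wanted $x_1,x_3$ distinct-or-not linked through one middle vertex... but more usefully, $\alpha(n \to n+1 \to n)$ itself counts paths $x_1 < x_2 > x_3$, i.e.\ exactly the choice of a middle $x_2$ together with an ordered pair $(x_1,x_3)$ of elements it covers. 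So the walks with $x_2 = x_4$ number exactly $\alpha(n \to n+1 \to n)$. The key structural input — this is where Definition \ref{diff}(i) and Lemma \ref{cover} enter — is to handle the walks with $x_2 \neq x_4$: such a walk forces $x_1$ and $x_3$ to be two (not necessarily distinct) elements both covered by $x_2$ and both covered by $x_4$; if $x_1 \neq x_3$, axiom (i) / Lemma \ref{cover} says $x_2$ and $x_4$ can cover at most one common element, a contradiction, so $x_1 = x_3$; and then $x_1 = x_3$ must be the unique common lower cover of the distinct pair $x_2 \neq x_4$. Thus the walks with $x_2 \neq x_4$ are in bijection with pairs of distinct elements of $P_{n+1}$ having a common lower cover, and by axiom (i) these are exactly the pairs of distinct elements of $P_{n+1}$ with a common \emph{upper} cover — but more directly, each such walk is determined by choosing $x_1 \in P_n$ and then an ordered pair of distinct elements $x_2, x_4$ both covering $x_1$, which is $\sum_{x \in P_n} c(x)(c(x)-1)$.

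Assembling the pieces: the degenerate-base walks ($x_1 = x_3$, $x_2,x_4$ arbitrary) number $\sum_x c(x)^2$; among \emph{all} closed walks, those with $x_2 = x_4$ number $\alpha(n \to n+1 \to n)$, and (as just argued) every walk with $x_2 \neq x_4$ automatically has $x_1 = x_3$; finally the walks with $x_1 = x_3$ \emph{and} $x_2 = x_4$ number $\alpha(n \to n+1)$, since such a walk is just a choice of a covering pair $x_1 < x_2$. Therefore
\[
\kappa(n \to n+1 \to n \to n+1 \to n) = \sum_{x \in P_n} c(x)^2 + \alpha(n \to n+1 \to n) - \alpha(n \to n+1),
\]
by inclusion–exclusion on the two events $\{x_2 = x_4\}$ and $\{x_1 = x_3\}$ over the set of closed walks (noting $\{x_2 \neq x_4\} \subseteq \{x_1 = x_3\}$), which rearranges to the claimed identity. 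I expect the main obstacle to be setting up the inclusion–exclusion bookkeeping cleanly — in particular making sure the ``unconstrained'' counts on the left-hand side and the constrained walk counts on the right are matched up with the right treatment of the degenerate cases $x_1 = x_3$ and $x_2 = x_4$ — whereas the genuinely poset-theoretic content is the short application of Lemma \ref{cover} forcing $x_1 = x_3$ whenever $x_2 \neq x_4$.
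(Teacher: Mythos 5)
Your proposal is correct and is essentially the paper's own argument: both hinge on the single observation (via Lemma \ref{cover}) that a closed walk with $x_1\neq x_3$ forces $x_2=x_4$ --- you state this in contrapositive form and package the count as an inclusion--exclusion over the events $\{x_1=x_3\}$ and $\{x_2=x_4\}$, whereas the paper partitions the walks by $x_1=x_3$ versus $x_1\neq x_3$, but the resulting three-term bookkeeping is identical.
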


\begin{proof}
Among the closed Hasse walks in $P$ enumerated by $\kappa (n \rightarrow n+1 \rightarrow n \rightarrow n+1 \rightarrow n)$, it is easy to see that $c(x)^2$ is the number of those walks starting at $x=x_1\in P_n$ such that $x_3=x_1$. Otherwise, if $x_3\neq x_1$,  clearly $x_2$ is a common cover of $x_1$ and $x_3$. Similarly, from the uniqueness of the common cover, it follows that $x_4= x_2$. 

Therefore one moment's thought now gives that, in order to obtain $\kappa (n \rightarrow n+1 \rightarrow n \rightarrow n+1 \rightarrow n)$, we need to add to $\sum_{x\in P_n}c(x)^2$ a contribution of exactly
$$\alpha (n \rightarrow n+1 \rightarrow n)-\sum_{x\in P_n}c(x)=\alpha (n \rightarrow n+1 \rightarrow n)-\alpha (n \rightarrow n+1),$$
which concludes the proof of the lemma.
\end{proof}

The following lemma is also of independent interest, and appears to be new even for the Young lattice (so when $r=1$ and $p_n=p(n)$, the partition function).

\begin{lemma}\label{c2}
For any $r$-differential poset $P$ with with rank function $\textbf{p}{\ }: {\ }p_0,p_1,p_2,\dots $ and  any nonnegative integer $n$, we have:
$$\sum_{x\in P_n}c(x)^2=\sum_{j=0}^n (r^2(n-j+1)+\epsilon r)p_j,$$
where $\epsilon $ is the remainder of $n-j$ modulo 2.
\end{lemma}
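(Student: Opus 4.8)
The plan is to combine Lemma~\ref{kappa} with known closed formulas for the walk-counting quantities $\kappa(n\to n+1\to n\to n+1\to n)$, $\alpha(n\to n+1\to n)$ and $\alpha(n\to n+1)$, all of which come from the eigenvalue/generating-function machinery of \cite{St}. Recall from formula~(\ref{alpha1}) that $\alpha(n\to n+1)=r\sum_{j=0}^n p_j$. The key additional input is the analogous formula for $\alpha(n\to n+1\to n)$: since a Hasse walk $x_1<x_2>x_3$ is obtained by first going up and then down, and the down-up and up-down operators $D,U$ on the free vector space satisfy $DU-UD=rI$, one gets $\alpha(n\to n+1\to n)=\langle DU\,v_n, v_n\rangle$ where $v_n=\sum_{x\in P_n}x$; using $DU=UD+rI$ and induction this evaluates to $\alpha(n\to n+1\to n)=\sum_{j=0}^n (r(n-j)+r)p_j = r\sum_{j=0}^n(n-j+1)p_j$. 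Likewise $\kappa(n\to n+1\to n\to n+1\to n)=\langle (DU)^2 v_n, v_n\rangle$; expanding $(DU)^2=(UD+rI)^2=UDUD+2rUD+r^2I$ and iterating the commutation relation reduces this to a double sum in the $p_j$'s. These are exactly the kinds of identities recorded in \cite{St}, Theorem 3.2, so I would quote them rather than rederive them.

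First I would state precisely the three formulas for $\alpha(n\to n+1)$, $\alpha(n\to n+1\to n)$, and $\kappa(n\to n+1\to n\to n+1\to n)$ in terms of the $p_j$. Second, I would substitute them into the identity of Lemma~\ref{kappa}, namely
$$\sum_{x\in P_n}c(x)^2=\kappa(n\to n+1\to n\to n+1\to n)-\alpha(n\to n+1\to n)+\alpha(n\to n+1).$$
Third, I would collect the coefficient of each $p_j$ on the right-hand side. The expected outcome is that the coefficient of $p_j$ is a quadratic-in-$(n-j)$ term coming from $\kappa$ (this should produce the $r^2(n-j+1)$ main term), with the lower-order pieces from $-\alpha(n\to n+1\to n)+\alpha(n\to n+1)$ either cancelling or combining to leave only the parity correction $\epsilon r$, where $\epsilon\in\{0,1\}$ is $n-j \bmod 2$. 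Because the claimed answer already tells us the target, the bookkeeping is guided: I expect the $r^2$-part to telescope cleanly and the $\epsilon r$ term to emerge from the alternating contribution of the $2rUD$ cross-term in $(DU)^2$.

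Alternatively, and perhaps more cleanly, one can avoid quoting the higher walk formulas by proving the identity directly by induction on $n$. The base case $n=0$ is immediate ($P_0$ is a single element covering nothing, so the left side is $0$ and the right side is $r^2\cdot 1 + 0$ only if... —so one checks small cases carefully, noting $c$ of the bottom element is $0$ while the formula gives a nonzero value, which forces the correct reading of the indices). For the inductive step, one relates $\sum_{x\in P_{n+1}}c(x)^2$ to $\sum_{x\in P_n}c(x)^2$ using Wagner-type local structure: each element of $P_{n+1}$ either is covered by exactly $r$ new elements "sitting above" a single element of $P_n$, plus the "$w_i$" elements whose covers mirror those of an element of $P_{n-1}$. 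Tracking how $c(x)$ changes and using axioms (i),(ii) of Definition~\ref{diff} and Lemma~\ref{cover} to control the cross terms, one recovers the recursion that the right-hand side of the claimed formula satisfies.

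The main obstacle is the careful evaluation of $\kappa(n\to n+1\to n\to n+1\to n)$ in terms of the $p_j$: this is the only genuinely nontrivial walk count, and it is where the parity term $\epsilon r$ must appear. If I use the operator method, the difficulty is correctly expanding $(UD+rI)^2$ and summing the resulting nested commutators; if I use induction, the difficulty is the combinatorial analysis of how covering numbers propagate through one application of Wagner's construction, since $c(x)$ for a "$w_i$"-type element of $P_{n+1}$ equals $c(y_i)$ for the corresponding $y_i\in P_{n-1}$, which is what injects the "$-2$ shift in index/parity" behavior into the recursion. Either way, once that single quantity is pinned down, the rest is a routine collection of coefficients.
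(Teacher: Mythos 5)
Your first route follows the same skeleton as the paper's proof (substitute closed forms for $\kappa(n\to n+1\to n\to n+1\to n)$, $\alpha(n\to n+1\to n)$ and $\alpha(n\to n+1)$ into Lemma \ref{kappa} and collect coefficients of $p_j$), but the one concrete formula you actually write down is wrong, and it is wrong for a reason that undermines the whole derivation sketch. You claim $\alpha(n\to n+1\to n)=r\sum_{j=0}^n(n-j+1)p_j$. Test it on Young's lattice with $n=1$: the only walks $x_1<x_2>x_3$ with $x_1,x_3\in P_1$ are $(1)<(2)>(1)$ and $(1)<(11)>(1)$, so $\alpha(1\to 2\to 1)=2$, while your formula gives $3$. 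The correct expansion (which one reads off the paper's $A_w(q)$) is $\alpha(n\to n+1\to n)=\sum_{j=0}^n\bigl(r^2(n-j)+(1-\epsilon)r\bigr)p_j$ --- note the $r^2$, not $r$, and the parity term already present at this stage. The flaw in your induction is this: writing $\alpha(n\to n+1\to n)=\langle DUv_n,v_n\rangle$ and applying $DU=UD+rI$ gives $\langle UDv_n,v_n\rangle+rp_n=\lVert Dv_n\rVert^2+rp_n=\sum_{y\in P_{n-1}}c(y)^2+rp_n$. Since $Dv_n=\sum_{y\in P_{n-1}}c(y)\,y$ is not a multiple of $v_{n-1}$, you do not land back on $\alpha(n-1\to n\to n-1)$; you land on the very quantity the lemma is computing, one rank down. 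The commutation relation therefore yields a coupled system (indeed a two-step recursion, which is where $\epsilon$ comes from), not the one-step telescoping you assert. The same issue infects your treatment of $\kappa$: expanding $(DU)^2=U^2D^2+3rUD+r^2I$ requires the separate machinery of \cite{St} (Theorem 3.12 for closed walks and Corollary 3.4 for the word $DU$, not Theorem 3.2, which handles only the straight-up counts $\alpha(n\to n+k)$), and this is precisely the nontrivial content that the paper imports and that your proposal leaves unestablished.

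Your alternative route is not repairable as stated: the "Wagner-type local structure" (each element of $P_{n+1}$ is either one of $r$ new elements over a single $x\in P_n$ or a $w_i$ mirroring the up-set of some $y_i\in P_{n-1}$) is a property of posets \emph{built} by Wagner's construction, not of arbitrary $r$-differential posets, and the lemma must hold for all of them. An induction on $n$ can be made to work, but it has to run on the two-step recursion $\sum_{y\in P_{n+2}}c(y)^2=\sum_{x\in P_n}c(x)^2+2r\,\alpha(n+2\to n+3)-r^2p_{n+2}+rp_{n+1}$ (obtained from $c(y)=d(y)+r$ and $\lVert Dv_{n+2}\rVert^2=\langle(DU-rI)v_{n+2},v_{n+2}\rangle$), which uses only the axioms and formula (\ref{alpha1}); that is a genuinely different and arguably more elementary argument than the paper's generating-function computation, but it is not the one you gave.
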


\begin{proof}
We will make extensive use, in this proof, of the notation of \cite{St}. We begin by evaluating $\kappa (n \rightarrow n+1 \rightarrow n \rightarrow n+1 \rightarrow n)$. By \cite{St}, Theorem 3.12, we easily have that, if we set $F(q)=\sum_{n\geq 0}p_nq^n$, then $\kappa (n \rightarrow n+1 \rightarrow n \rightarrow n+1 \rightarrow n)$ is the degree $n$ coefficient of $F(q)B_f(q)$, where
$$f=f(U,D)=(DU)^2=U^2D^2+3rUD+r^2,$$
and
\begin{equation}\label{bf}
B_f(q)=\frac{2r^2q^2}{(1-q)^2}+\frac{3r^2q}{1-q}+r^2.
\end{equation}

An equally standard computation gives that $\alpha (n \rightarrow n+1 \rightarrow n)$ is the  degree $n$ coefficient of $F(q)A_w(q)$ in Corollary 3.4 of \cite{St}. In the notation of that corollary, the word $w$ is $w=DU$, $g_U(z)=z$, and $$g_w(z)=(r+z)g_U(z)+r\frac{dg_U(z)}{dz}=(r+z)z+r\cdot 1=z^2+rz+r.$$

Therefore, using the rational functions $A_k(q)$ of \cite{St}, Theorem 3.2 (see page 930 for some explicit computations), $A_w(q)$ can be written as:
\begin{equation}\label{aw}
A_w(q)= \frac{r(r+1)q^2+r(r-1)q^3}{(1-q)(1-q^2)}+\frac{r^2q}{1-q}+r.
\end{equation}

Finally, by \cite{St}, Theorem 3.2, or by our equality (\ref{alpha1}) above, we have that 
$\alpha (n \rightarrow n+1 )$ is the degree $n$ coefficient of $F(q)A_1(q)$, where
\begin{equation}\label{a1}
A_1(q)=\frac{r}{1-q}.
\end{equation}

Hence, employing Lemma \ref{kappa} and equations (\ref{bf}), (\ref{aw}) and (\ref{a1}), we easily obtain that $\sum_{x\in P_n}c(x)^2$ is  the  degree $n$ coefficient of:

$$F(q)\left(\frac{2r^2q^2}{(1-q)^2}+\frac{3r^2q}{1-q}+r^2-\frac{r(r+1)q^2+r(r-1)q^3}{(1-q)(1-q^2)}-\frac{r^2q}{1-q}-r+\frac{r}{1-q}\right)$$
\begin{equation}\label{fq}
=F(q)\frac{-rq^2+(r^2+r)q+r^2}{(1-q)(1-q^2)}.
\end{equation}

Notice that if $\lceil a \rceil $ as usual denotes the smallest integer $\geq a$, then we have:
$$\frac{F(q)}{(1-q)(1-q^2)}=F(q)\left(\sum_{i\geq 0}q^i\right)\left(\sum_{i\geq 0}q^{2i}\right)$$$$=\sum_{n\geq 0}p_nq^n\cdot \sum_{i\geq 0}\left\lceil \frac{i+1}{2}\right\rceil q^i=\sum_{n\geq 0}\left(\sum_{j=0}^n\left\lceil \frac{n-j+1}{2}\right\rceil p_j\right)q^n.$$

Thus, from (\ref{fq}), it follows that $\sum_{x\in P_n}c(x)^2$ is  $r^2$ times the  degree $n$ coefficient of the right hand side of the last equation, plus $r^2+r$ times its  degree $n-1$ coefficient, minus $r$ times its  degree $n-2$ coefficient. 

Therefore,
$$\sum_{x\in P_n}c(x)^2=r^2 \sum_{j=0}^n\left\lceil \frac{n-j+1}{2}\right\rceil p_j +(r^2+r)\sum_{j=0}^{n-1}\left\lceil \frac{n-j}{2}\right\rceil  p_j-r\sum_{j=0}^{n-2}\left\lceil \frac{n-j-1}{2}\right\rceil p_j,$$
which is a simple exercise to check that it coincides with the formula in the statement. This completes the proof of the lemma.
\end{proof}

Finally, we need an asymptotic estimate for $\alpha(0\rightarrow n)$, the number of chains in $P$ from rank 0 to rank $n$.

\begin{lemma}\label{0n}
For any fixed positive integer $r$ and any $r$-differential poset $P$, we have:
$$\alpha(0\rightarrow n)\sim \frac{\sqrt{n!}\cdot r^{n/2}e^{\sqrt{rn}}}{(8\pi e^{3r-2}n)^{1/4}}.$$
\end{lemma}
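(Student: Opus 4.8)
The plan is to show first that $\alpha(0\to n)$ does not in fact depend on the particular poset $P$: it is determined entirely by $r$, through a second-order recurrence, and only afterwards to extract its asymptotics. Writing $a_n=\alpha(0\to n)$ and $e(x)$ for the number of saturated chains from the bottom element $\hat0$ to $x\in P_n$, I would count chains of length $n$ by how they extend a chain of length $n-1$. A chain ending at $x\in P_{n-1}$ extends in exactly $c(x)+r$ ways, since by Definition \ref{diff}(ii) every $x$ is covered by $r$ more elements than it covers. Hence
$$a_n=\sum_{x\in P_{n-1}}e(x)\bigl(c(x)+r\bigr)=r\,a_{n-1}+\sum_{x\in P_{n-1}}e(x)\,c(x).$$
The remaining sum is handled with the up/down operator calculus of \cite{St}: $\sum_{x\in P_{n-1}}e(x)c(x)$ is the total number of Hasse walks $\hat0\to x\to y$ with $x\in P_{n-1}$ and $y\in P_{n-2}$, i.e. $\langle DU^{n-1}\hat0,\sigma\rangle$ where $\sigma$ is the sum of all rank-$(n-2)$ elements. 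Since $DU-UD=rI$ gives $DU^{n-1}\hat0=r(n-1)U^{n-2}\hat0$, this equals $r(n-1)a_{n-2}$, so
$$a_n=r\,a_{n-1}+r(n-1)\,a_{n-2},\qquad a_0=1,\ a_1=r.$$

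Next I would convert this into a generating function. Setting $A(x)=\sum_{n\ge0}a_n x^n/n!$, the recurrence translates into the differential equation $A'(x)=r(1+x)A(x)$, whose solution with $A(0)=1$ is
$$A(x)=\exp\!\Bigl(rx+\tfrac{r}{2}x^2\Bigr).$$
For $r=1$ this is the classical exponential generating function $e^{x+x^2/2}$ of the involutions of $[n]$, so $a_n$ is an $r$-analogue of the involution numbers and $a_n=n!\,[x^n]\exp(rx+rx^2/2)$.

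The asymptotic estimate then follows from a saddle-point analysis of this entire function. Because $\exp(rx+rx^2/2)$ is the exponential of a polynomial with positive leading coefficient, it is Hayman-admissible, and $[x^n]A$ is governed by the saddle point $R=R_n$ solving $rR+rR^2=n$, namely $R=\sqrt{n/r}-\tfrac12+O(n^{-1/2})$. Hayman's formula gives
$$[x^n]A(x)\sim\frac{A(R)}{R^{\,n}\sqrt{2\pi\,b(R)}},\qquad b(R)=n+rR^2\sim 2n,$$
and using $rR^2=n-rR$ one simplifies $\log A(R)=\tfrac{n}{2}+\tfrac{r}{2}R$. Multiplying by $n!$ and repackaging the factor $n^{n/2}e^{-n/2}(2\pi n)^{1/4}$ as $\sqrt{n!}$ via Stirling's formula then yields the stated closed form.

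I expect the main obstacle to be the precise determination of the multiplicative constant. The exponential order $r^{n/2}e^{\sqrt{rn}}$ together with the $\sqrt{n!}$ growth falls out robustly from the leading saddle-point term, but fixing the exact $O(1)$ constant in the denominator requires expanding $R_n$, $\log R_n$ and $\log A(R_n)$ one further order in $n^{-1/2}$ and noticing the cancellation of the $n^{-1}$ terms in $\log\bigl(R_n\sqrt{r/n}\bigr)$; only then do the bounded contributions combine correctly into the claimed constant. The reduction to the poset-independent recurrence $a_n=r a_{n-1}+r(n-1)a_{n-2}$ is the conceptual heart, while the remaining steps are careful but essentially routine asymptotic bookkeeping.
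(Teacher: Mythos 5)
Your route is the same one the paper takes: the paper's proof consists of citing Stanley's formula $\sum_{n\geq 0}\alpha(0\rightarrow n)q^n/n!=e^{rq+rq^2/2}$ and then invoking the saddle-point method (Pemantle's Example 3.2, which treats $r=1$). Your derivation of that exponential generating function from the recurrence $a_n=ra_{n-1}+r(n-1)a_{n-2}$, itself extracted from $DU-UD=rI$, is correct and is a nice self-contained substitute for the citation (one cosmetic clash: the paper uses $c(x)$ for the number of elements \emph{covering} $x$, while you use it for the number of elements $x$ covers). The saddle-point data you record are also right: $rR+rR^2=n$, $b(R)=n+rR^2\sim 2n$, and $\log A(R)=\tfrac{n}{2}+\tfrac{r}{2}R$. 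The one substantive caveat is that the final ``routine bookkeeping'' does not land on the displayed constant when $r\geq 2$: expanding $R=\sqrt{n/r}-\tfrac12+\tfrac18\sqrt{r/n}+O(n^{-3/2})$ gives
$$\log A(R)-n\log R=\frac{n}{2}-\frac{n}{2}\log\frac{n}{r}+\sqrt{rn}-\frac{r}{4}+o(1),$$
so after multiplying by $n!$ and applying Stirling the denominator comes out as $(8\pi e^{r}n)^{1/4}$, not $(8\pi e^{3r-2}n)^{1/4}$; the two agree only at $r=1$, the case actually worked out in the cited reference. A numerical check (e.g.\ $r=2$, $n=16$, where $\alpha(0\rightarrow 16)=50305536256$) supports $(8\pi e^{r}n)^{1/4}$. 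So if you carry out the expansion honestly you should expect to \emph{correct} the stated constant rather than reproduce it; this lies in the statement, not in your method, and it is harmless for Theorem \ref{main}, which uses only the order of growth $\sqrt{n!}\,r^{n/2}e^{\sqrt{rn}}$ up to polynomial factors.
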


\begin{proof}
From the exponential generating function of \cite{St}, Proposition 3.1 (formula (12)), namely
$$\sum_{n\geq 0}\alpha(0\rightarrow n)\frac{q^n}{n!}=e^{rq+rq^2/2},$$
it is a standard task to determine  estimates for $\frac{\alpha(0\rightarrow n)}{n!}$, and in particular the desired asymptotic value for $\alpha(0\rightarrow n)$. (See e.g. \cite{Pe}, Example 3.2, where the case $r=1$ is worked out in detail, using the saddle point method.)
\end{proof}

Given two functions $f,g: \mathbb N \longrightarrow \mathbb R^{+}$, we say that $f(n)\gg g(n)$ if $f(n)\ge Cg(n)$ for $n$ large, for some positive constant $C$ (i.e., if $g(n)=O(f(n))$).

\begin{theorem}\label{main}
Let $P$ be any arbitrary $r$-differential poset with rank function $\textbf{p}{\ }: {\ }p_0,p_1,p_2,\dots $. Then there exists a constant $a$ such that
$$p_n\gg n^ae^{2\sqrt{rn}}.$$
\end{theorem}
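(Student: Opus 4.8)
The plan is to combine the exact identity for $\sum_{x\in P_n} c(x)^2$ from Lemma~\ref{c2}, a Cauchy–Schwarz lower bound relating this quantity to $\alpha(0\to n+1)$, and the asymptotic estimate for $\alpha(0\to n)$ from Lemma~\ref{0n}. The key chain of inequalities runs as follows. First, observe that $\alpha(n\to n+1)=\sum_{x\in P_n} c(x)$ and that $\alpha(0\to n+1)=\sum_{y\in P_{n+1}} (\text{number of chains from } \hat 0 \text{ to } y)$; more usefully, the number of chains $\alpha(0\to n+1)$ factors through rank $n$ as $\alpha(0\to n+1)=\sum_{x\in P_n}(\text{number of chains } \hat 0 \to x)\cdot c(x)$. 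By Cauchy–Schwarz,
\begin{equation*}
\alpha(0\to n+1)^2 \;\le\; \Bigl(\sum_{x\in P_n}(\text{\# chains }\hat 0\to x)^2\Bigr)\Bigl(\sum_{x\in P_n} c(x)^2\Bigr).
\end{equation*}
The first factor on the right is bounded above: since each chain from $\hat 0$ to a rank-$n$ element extends to at most $\alpha(0\to n)$-many... — more precisely $\sum_{x\in P_n}(\text{\# chains }\hat 0\to x)^2 \le \bigl(\max_x \#\text{chains}\bigr)\cdot\alpha(0\to n) \le \alpha(0\to n)^2$. Combining, $\sum_{x\in P_n} c(x)^2 \ge \alpha(0\to n+1)^2/\alpha(0\to n)^2$.

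Next I would feed in the asymptotics. By Lemma~\ref{0n}, $\alpha(0\to n+1)/\alpha(0\to n) \sim \sqrt{(n+1)}\cdot r^{1/2}\cdot e^{\sqrt{r(n+1)}-\sqrt{rn}}\cdot(\text{polynomial correction})$. Since $\sqrt{r(n+1)}-\sqrt{rn}=\sqrt{r}/(\sqrt{n+1}+\sqrt n)\to 0$, the exponential factors essentially cancel and the ratio grows like $\Theta(\sqrt{rn})$. Hence $\sum_{x\in P_n}c(x)^2 \gg n$ — which is far too weak. The real point must therefore be to use Lemma~\ref{c2} in the other direction: the explicit formula gives $\sum_{x\in P_n}c(x)^2 = \sum_{j=0}^n (r^2(n-j+1)+\epsilon r)p_j \le (r^2(n+1)+r)\sum_{j=0}^n p_j$, so that
\begin{equation*}
\sum_{j=0}^n p_j \;\ge\; \frac{1}{r^2(n+1)+r}\cdot\frac{\alpha(0\to n+1)^2}{\alpha(0\to n)^2}
\end{equation*}
is still too weak; instead one should bound $\alpha(0\to n+1)$ below directly by $\alpha(0\to n)\cdot(\text{min cover count})$, which fails since covers can be zero — so the correct route is: $\alpha(0\to 2n)=\sum_{x\in P_n}(\#\text{chains }\hat0\to x)\cdot(\#\text{chains }x\to P_{2n})$, and by symmetry of the construction one relates both factors to $\alpha(0\to n)$ and to the $c(x)$. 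The clean statement I expect to prove is $\alpha(0\to n)^2 \le \alpha(0\to 2n)\cdot(\text{something})$ together with $\alpha(0\to 2n)\le (\text{poly})\cdot\bigl(\sum_j p_j\bigr)\cdot\bigl(\max \text{chain count to rank }2n\bigr)$, which, via $\max\text{chain count}\le \alpha(0\to 2n)/p_{2n}$... Let me instead commit to the cleanest version that I am confident works:
\begin{equation*}
\alpha(0\to n)^2 \;=\;\Bigl(\sum_{x\in P_{n/2}} a_x b_x\Bigr)^2 \;\le\; \Bigl(\sum a_x^2\Bigr)\Bigl(\sum b_x^2\Bigr),
\end{equation*}
where $a_x$ counts chains $\hat 0\to x$ and $b_x$ counts chains $x\to P_n$; both sums $\sum a_x^2$ and $\sum b_x^2$ are, up to the estimates of Lemmas~\ref{c2}--\ref{0n} applied at levels $n/2$ and iterated, bounded by polynomials times $p_n$ (or $\sum_{j\le n} p_j$). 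Rearranging yields $p_n \gg \alpha(0\to n)^2 / (\text{poly}(n))$, and then substituting Lemma~\ref{0n}'s asymptotic $\alpha(0\to n)\sim \sqrt{n!}\,r^{n/2}e^{\sqrt{rn}}/(8\pi e^{3r-2}n)^{1/4}$ gives $\alpha(0\to n)^2 \sim n!\, r^n e^{2\sqrt{rn}}/(\text{poly})$; applying Stirling, $n!\,r^n$ grows like $(n/e)^n r^n \sqrt{2\pi n}$, which dominates any polynomial and produces a superpolynomial — indeed the factor $e^{2\sqrt{rn}}$ survives, so $p_n \gg n^a e^{2\sqrt{rn}}$ for a suitable (negative, as it happens) constant $a$.

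**Where the difficulty lies.** The routine parts are the generating-function bookkeeping (already done in Lemmas~\ref{kappa}--\ref{0n}) and the final Stirling/asymptotic substitution. The genuine obstacle is getting an upper bound of the form $\sum_{x\in P_m}(\#\text{chains }\hat0\to x)^2 \ll \text{poly}(m)\cdot p_{2m}$ (or a variant), since a priori the chain-counts $a_x$ could be very unevenly distributed and their squares could be much larger than $p_{2m}$. The resolution should be that $\sum_{x\in P_m} a_x^2$ is itself the number of pairs of chains to a common rank-$m$ element, i.e. $\alpha(0\to m\to 0\to m)$-type quantity, which by the same kind of Hasse-walk enumeration from \cite{St} that underlies Lemma~\ref{c2} equals an explicit nonnegative combination of $p_0,\dots,p_m$ with polynomial coefficients — hence is $\ll \text{poly}(m)\sum_{j\le m}p_j \ll \text{poly}(m)\,p_m$ using the weak monotonicity of $p_n$ proved in \cite{St}. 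Making this last enumeration precise, and checking that the polynomial loss in the Cauchy–Schwarz step does not swallow the $e^{2\sqrt{rn}}$ factor, is the heart of the argument; everything else is assembly.
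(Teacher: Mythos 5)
Your overall skeleton is the paper's: a Cauchy--Schwarz inequality linking $\alpha(0\to n+1)=\sum_{x\in P_n}c(x)e(x)$ (where $e(x)$ is your $a_x$, the number of maximal chains ending at $x$) to $\sum_x c(x)^2$ and $\sum_x e(x)^2$, combined with the upper bound $\sum_x c(x)^2\ll n^2p_n$ from Lemma~\ref{c2} and the asymptotics of Lemma~\ref{0n}. But there is a genuine gap at exactly the point you flag as ``the heart of the argument,'' and your proposed resolution is false. You claim that $\sum_{x\in P_m}e(x)^2$ --- the number of pairs of maximal chains meeting at a common element of rank $m$ --- is a nonnegative combination of $p_0,\dots,p_m$ with polynomial coefficients, hence $\ll\mathrm{poly}(m)\,p_m$. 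In fact the Hasse-walk enumeration gives the exact closed form $\sum_{x\in P_m}e(x)^2=r^m\,m!$ (Corollary 3.9 of \cite{St}, a direct consequence of the commutation relation $DU-UD=rI$); since $p_m$ is at most the rank function of $Z(r)$, which grows only exponentially, $r^m m!$ is vastly larger than $\mathrm{poly}(m)\,p_m$. Your claimed bound would yield $p_n\gg\sqrt{n!}\cdot(\text{stuff})$, contradicting Byrnes's upper bound, so some step must fail, and this is it. Your earlier fallback, $\sum_x e(x)^2\le\alpha(0\to n)^2$, is a valid inequality but, as you correctly diagnose, loses a factor of roughly $e^{2\sqrt{rn}}/\mathrm{poly}(n)$ relative to the truth --- precisely the exponential factor the theorem needs to retain.

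The missing ingredient is therefore the exact identity $\sum_{x\in P_n}e(x)^2=r^nn!$. With it the computation closes immediately: Cauchy--Schwarz gives
$$n^2p_n\gg\sum_{x\in P_n}c(x)^2\ \geq\ \frac{\alpha(0\to n+1)^2}{\sum_{x\in P_n}e(x)^2}=\frac{\alpha(0\to n+1)^2}{r^nn!}\gg\frac{n^{a'}(n+1)!\,r^{n+1}e^{2\sqrt{r(n+1)}}}{r^nn!},$$
where the last step squares the asymptotic of Lemma~\ref{0n}; the factorials and powers of $r$ cancel down to a factor of $(n+1)r$, leaving $p_n\gg n^ae^{2\sqrt{rn}}$. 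No splitting at rank $n/2$, no iteration, and no bound on $\max_xe(x)$ is needed. Your instinct that the pair-of-chains count should be computable by Stanley's walk machinery was right; the error was in guessing what that machinery outputs (a pure constant $r^mm!$, since the walk starts and ends at $\hat 0$ and the generating function $F(q)$ never enters), and that wrong guess is what made the rest of your outline appear to assemble.
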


\begin{proof}
For any $x\in P_n$, let $e(x)$ be the number of maximal chains in $P$ ending at $x$. Hence, clearly, $\alpha(0\rightarrow n)=\sum_{x\in P_n}e(x)$, and \begin{equation}\label{ddd}
\alpha(0\rightarrow n+1)=\sum_{x\in P_n}c(x)e(x).
\end{equation}

Thus, from the Cauchy-Schwarz inequality, we obtain
\begin{equation}\label{cs}
\sum_{x\in P_n}c(x)^2\geq \frac{\left(\sum_{x\in P_n}c(x)e(x)\right)^2}{ \sum_{x\in P_n}e(x)^2}.
\end{equation}

By \cite{St}, Corollary 3.9, we have that $\sum_{x\in P_n}e(x)^2=r^nn!$. Also, since by  \cite{St}, Corollary 4.3, the $p_j$ are nondecreasing, Lemma \ref{c2}   yields the trivial estimate
$$\sum_{x\in P_n}c(x)^2\ll \sum_{j=0}^nnp_n\ll n^2p_n.$$

Therefore, from (\ref{ddd}), (\ref{cs}) and Lemma \ref{0n}, we have
$$n^2p_n\gg \frac{\alpha(0\rightarrow n+1)^2}{r^nn!}\gg \frac{n^{a'}(n+1)!\cdot r^{(n+1)}e^{2\sqrt{r(n+1)}}}{r^nn!},$$
and the theorem  follows.
\end{proof}

\section{Open problems}

We wrap up this note by gathering together a few open questions specifically on the rank function of a differential poset, some of which were previously posed in \cite{St}.

As we saw in the last section, it is well known from \cite{St}, Corollary 4.3, that the rank function $p_n$ of any differential poset is \emph{weakly} increasing. However, it is reasonable to conjecture that, in fact,  strict inequality always holds (see also \cite{MR,St}).

\begin{question}\label{1}
Is it true that, for any differential poset and any positive integer $n$, $p_n< p_{n+1}$?
\end{question} 

Note that the inequality $p_n\leq p_{n+1}$ was proved in \cite{St} using a linear algebra argument. Determining  a combinatorial proof might very well help answer positively Question \ref{1}. 

Define the \emph{first difference} \emph{$\Delta \textbf{p}$} of a sequence \emph{$\textbf{p}{\ }: {\ } 1,p_1,p_2,\dots $} as the sequence \emph{$\Delta \textbf{p}{\ }: {\ }1,(\Delta \textbf{p})_1=p_1-1,(\Delta \textbf{p})_2=p_2-p_1,\dots $}. Recursively, define the \emph{$t$-th difference} $\Delta^t$ as the first difference of the $(t-1)$-st difference. Motivated by all examples we are aware of (and the name itself of a differential poset!), more boldly we ask:

\begin{question}\label{2} (i) Is it true that, for any $r$-differential poset with rank function \emph{$\textbf{p}$}, \emph{$\Delta^r \textbf{p}>0$} in any ``rank''  $n\geq 2$?\\
(ii) Is it true that, for any differential poset with rank function \emph{$\textbf{p}$} and any integer $t$, \emph{$\Delta^t \textbf{p}>0$} in any ``rank''  $n$ large enough (depending on $t$)?
\end{question}

Notice that Question \ref{2} is known to have an affirmative answer for both the Young lattice and the Fibonacci $r$-differential posets. As for $Z(r)$, both parts (i) and (ii) are easy to check, and so is part (i) for $Y$. Part (ii) for $Y$ was proved by H. Gupta \cite{Gu}, in response to a problem raised by G. Andrews (see also A.M. Odlyzko \cite{Od}, for a  sharper estimate using the Rademacher convergent series expansion for $p(n)$). Of course, for 1-differential posets, Question \ref{1} is equivalent to  Question \ref{2}, part (i). See also Miller-Reiner's \cite{MR}, Conjecture 2.3, for a related conjecture.

Conversely, we  ask (see also \cite{St}):

\begin{question}\label{22}
Is it true that, for any $r$-differential poset and any integer $n\geq 2$,
$$p_n\leq rp_{n-1}+p_{n-2}?$$
\end{question}

Answering a question posed in \cite{St}, P. Byrnes \cite{B} recently proved that the rank function of any $r$-differential poset is bounded from above, rankwise, by that of  $Z(r)$, which satisfies the equality in Question \ref{22} for all $n$. 

As for a lower bound, in \cite{St} the first author  asked the following:

\begin{question}\label{3}
Is the rank function of any $r$-differential poset bounded from below, rankwise, by that of $Y^r$?
\end{question}

For both Questions \ref{22} and \ref{3}, see part (ii) of Remark \ref{alpha}. Note that, even though the asymptotic lower bound we have proved for $p_n$ in the previous section is  close to being optimal, our approach does not appear to be of help in settling Question \ref{3}.

As we have seen,  a characterization of all rank functions of differential posets seems entirely out of reach. Thus, even  though the Interval Property has been shown to fail in general in Theorem \ref{ip}, it would be  helpful to recover it for some important families of  differential posets, for it would at least imply a natural and very strong  structural result on the set of their rank functions. For instance, we ask:

\begin{question}\label{5}
Do 1-differential posets enjoy the Interval Property?
\end{question}

P. Byrnes \cite{B} has computed the sequences that may occur as rank functions of 1-differential posets up to rank ten (there are 44,606 nonisomorphic 1-differential posets up to rank nine, and 29,199,636 up to rank ten). Such sequences suggest that, at least for the initial ranks, Question \ref{5} could very well have a positive answer.\\
\\
\textbf{Note added after acceptance.} By means of a nice algebraic argument, A. Miller \cite{Mil} recently proved that Question 4.1 has a positive answer; that is, the rank function of any $r$-differential poset is \emph{strictly} increasing, with the only exception $p_0=p_1$ when $r=1$. It remains a very interesting (and possibly consequential) open problem to determine a combinatorial proof of this result.

\subsection*{Acknowledgements} The second author warmly thanks the first author for his terrific hospitality during calendar year 2011, the MIT Mathematics Department for partial financial support, and Dr. Mark Gockenbach and the Michigan Tech Mathematics Department, from which the second author was on partial leave, for extra summer support. We also thank David Cook II and Joel Lewis for comments, and Pat Byrnes for kindly sharing an early draft of the results on differential posets that will be part of his Thesis \cite{B}.\\


\end{document}